\theoremstyle{plain}
\newtheorem{theorem}{Theorem}[section]
\newtheorem*{theorem*}{Theorem}
\newtheorem{lemma}[theorem]{Lemma}
\theoremstyle{definition}
\newtheorem{remark}[theorem]{Remark}
\newcommand{\enm}[1]{\ensuremath{#1}}          %
\newcommand{\cal}[1]{\mathcal{#1}}
\newcommand{\CC}{\enm{\mathbb{C}}}
\newcommand{\RR}{\enm{\mathbb{R}}}
\newcommand{\QQ}{\enm{\mathbb{Q}}}
\newcommand{\PP}{\enm{\mathbb{P}}}
\newcommand{\Bb}{\enm{\cal{B}}}
\newcommand{\Ii}{\enm{\cal{I}}}
\newcommand{\Ss}{\enm{\cal{S}}}
\renewcommand{\phi}{\varphi}
\renewcommand{\theta}{\vartheta}
\renewcommand{\epsilon}{\varepsilon}
\begin{document}

\title[symmetric tensors]
{An effective criterion for the additive decompositions of forms}
\author{Edoardo Ballico}
\address{Dept. of Mathematics\\
 University of Trento\\
38123 Povo (TN), Italy}
\email{ballico@science.unitn.it}
\thanks{The author was partially supported by MIUR and GNSAGA of INdAM (Italy).}
\subjclass[2010]{14N05; 15A69; 15A72; 14N20}
\keywords{symmetric tensor rank; additive decomposition of polynomials; Waring decomposition; identifiability}

\begin{abstract}
We give an effective criterion for the identifiability of additive decompositions of homogeneous forms of degree $d$ in a fixed
number of variables. Asymptotically for large $d$ it has the same order of the  Kruskal's criterion  adapted to symmetric tensors given by
 L. Chiantini, G. Ottaviani and N. Vannieuwenhoven. We give a new case of indentifiability for $d=4$.
\end{abstract}

\maketitle

\section{Introduction}
Let $\mathbb {C}[z_0,\dots ,z_n]_d$ denote the complex vector space of all homogeneous degree $d$ polynomials in the variables
$z_0,\dots ,z_n$. An \emph{additive decomposition} (or a Waring decomposition) of  a form $f\in \CC[z_0,\dots ,z_n]_d\setminus
\{0\}$ is a finite sum
\begin{equation}\label{eqi1} f = \sum  \ell _i^d
\end{equation}
with each $\ell _i\in \CC[z_0,\dots ,z_n]_1$. The minimal number $R (f)$ of addenda
in an additive decomposition of $f$ is called the rank of $f$. Degree $d$ forms in the variables $z_0,\dots ,z_n$ correspond
to symmetric tensors of format $(n+1)\times \cdots \times (n+1)$ ($d$ times), i.e.  to symmetric elements of
$(\CC^{n+1})^{\otimes d}$. An additive decomposition (\ref{eqi1}) of $f$ is said to be \emph{minimal} if there are no $c_i\in
\CC$ with at least one
$c_i=0$ such that $f =\sum _ic_i\ell _i^d$. See \cite{l} for a long list of possible applications and the language needed.
Obviously it is interesting to know when a minimal decomposition of
$f$ has only $R (f)$ addenda i.e. knowing it we also know $R (f)$. More important is to know that there are no other additive decompositions of $f$ with $R (f)$ addenda
(obviously up to a permutation of the addenda $\ell _i^d$). 

In \cite{cov2} L. Chiantini, G. Ottaviani and N. Vannieuwenhoven stressed the
importance (even for arbitrary tensors) of effective criteria for the identifiability and gave a long list of practical
applications (with explicit examples even in Chemistry). We add to the list (at least in our hope) the tensor networks
(\cite{bberg, bbl, o}, at least for tensors without symmetries. For the case of bivariate forms, see \cite{bc} (but for
bivarariate forms the identifiability of a form only depends from its rank and, for generic bivariate forms, by the parity of
$d$ by a theorem of Sylvester's (\cite[Theorem 1.5.3 (ii)]{ik}). 

L. Chiantini, G. Ottaviani and N. Vannieuwenhoven stressed the importance of the true effectivity of the criterion to be
tested (as the famous Kruskal's criterion for the tensor decomposition (\cite{kr}). They reshaped the Kruskal's criterion to the case of
additive decompositions (\cite[Theorem 4.6 and Proposition 4.8]{cov2}) and proved that is effective (for $d\ge 5$) for ranks at
most
$\sim n^{\lfloor (d-1)/2\rfloor}$. The upper bound to apply our criterion has the same asymptotic order when $d\gg 0$, but we
hope that it is easy and efficient. Then in \cite{acv} E. Angelini, L. Chiantini and N. Vannieuwenhoven considered the case
$d=4$ and added the analysis of one more rank. Among the huge number of papers considering mostly ``generic '' identifiability
we also mention
\cite{an, abc, agmo, bco, chio, cov0, cov1}.

To state our results we need the following geometric language for instance fully explained, e.g., in \cite{cov2, l}.

Set  $\PP^n:= \PP
\CC[z_0,\dots ,z_n]_1$. Thus points of the $n$-dimensional complex space $p$ corresponds to non-zero linear forms, up  to a
non-zero multiplicative constant. Set $r:= \binom{n+d}{n}-1$. Thus $\PP^r:= \PP\CC[z_0,\dots ,z_n]_d$ is an $r$-dimensional
projective space. Let $\nu _d: \PP^n\to \PP^r$ denote the order $d$ Veronese embedding, i.e. the map defined by the formula
$[\ell]\mapsto [\ell ^d]$. An additive decomposition (\ref{eqi1}) with $k$ non-proportional non-zero terms corresponds to a
subset $S\subset \PP^n$ such that $|S|=k$ and $[f] \in \langle \nu _d(S)\rangle$, where $\langle \ \ \rangle$ denote the
linear span. This decomposition is called \emph{minimal} and we say that the set $\nu _d(S)$ \emph{minimally spans} $[f]$ if
$[f]\in \langle \nu _d(S)\rangle$ and $[f]\notin \langle \nu _d(S')\rangle$ for each $S'\subsetneq S$. For any integer $t\ge 0$
each $p\in \PP^n$ gives a linear condition to the vector space $\CC[z_0,\dots ,z_n]_t$ taking $p_1=(z_0,\dots ,z_n)\in
\CC^{n+1}$ with $[p_1] =p$ and evaluating each $f\in \CC[z_0,\dots ,z_n]_t$ at $p_1$. When we do this evaluation for all points
of a finite set $S\subset \PP^n$ we get
$|S|$ linear equations and the rank of the corresponding matrix does not depend from the choice of the representatives of the
points of $S$.

We prove the following result.

\begin{theorem}\label{i1}
Fix $q\in \PP^r$ and take a finite set $S\subset \PP^n$ such that $\nu _d(S)$ minimally spans $q$. 

\quad (a) If $|S| \le \binom{n+\lfloor d/2\rfloor}{n}$ and $S$ gives $|S|$ independent conditions to the complex vector space $\CC[z_0,\dots ,z_n]_{\lfloor d/2\rfloor}$, then $r_X(q) =|S|$.

\quad (b) If $|S| \le \binom{n+\lfloor d/2\rfloor -1}{n}$ and $S$ gives $|S|$ independent conditions to the complex vector space $\CC[z_0,\dots ,z_n]_{\lfloor d/2\rfloor -1}$, then $S$ is the unique set evincing the rank of $q$.
\end{theorem}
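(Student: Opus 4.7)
The plan is to show $S$ is unique by identifying it as precisely the set of points of the Veronese $\nu_{d-k+1}(\PP^n)$ lying in a particular $|S|$-dimensional subspace $V\subset\CC[z_0,\dots,z_n]_{d-k+1}$ extracted from $f$ via the middle catalecticant, where $k:=\lfloor d/2\rfloor$. First I would observe that the hypothesis of (b) implies that of (a) (since $\binom{n+k-1}{n}\le\binom{n+k}{n}$, and $|S|$ independent conditions in degree $k-1$ persist in degree $k$), so by part (a), $r_X(q)=|S|$, and any other set $S'$ with $\nu_d(S')$ minimally spanning $q$ and $|S'|=r_X(q)$ has $|S'|=|S|$. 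I would then fix decompositions $f=\sum_i c_i\ell_i^d=\sum_j c'_j(\ell'_j)^d$, with all coefficients nonzero by minimality.

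The second step exploits the catalecticant $C_{k-1}(f)\colon\CC[\partial]_{k-1}\to\CC[z_0,\dots,z_n]_{d-k+1}$, $g\mapsto g(\partial)f$. Using $g(\partial)\ell^d=\frac{d!}{(d-k+1)!}g(\ell)\,\ell^{d-k+1}$, this map factors, via the decomposition $f=\sum c_i\ell_i^d$, as evaluation at $S$, nonzero diagonal scaling by the $c_i$, and the map $e_i\mapsto\ell_i^{d-k+1}$. The first is surjective by the hypothesis of (b); the third is injective because $d-k+1\ge k-1$ makes $\nu_{d-k+1}(S)$ linearly independent. Hence $C_{k-1}(f)$ has rank $|S|$, with image $V:=\langle\nu_{d-k+1}(S)\rangle$. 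The same computation through the decomposition indexed by $S'$ shows the image is also contained in $\langle\nu_{d-k+1}(S')\rangle$, so a dimension count gives $V=\langle\nu_{d-k+1}(S)\rangle=\langle\nu_{d-k+1}(S')\rangle$ and $\nu_{d-k+1}(S')$ is a second basis of $V$.

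The heart of the argument is the claim $V\cap\nu_{d-k+1}(\PP^n)=\nu_{d-k+1}(S)$. Suppose for contradiction that $p\in\PP^n\setminus S$ satisfies $\nu_{d-k+1}(p)\in V$, so $\ell_p^{d-k+1}=\sum_i a_i\ell_i^{d-k+1}$ for some scalars $a_i$. The hypothesis yields $h^1(\Ii_S(k-1))=0$, so $\Ii_S$ is $k$-regular; since $d-k+1=\lceil d/2\rceil+1\ge k$, the twist $\Ii_S(d-k+1)$ is globally generated, and I can choose $g\in I(S)_{d-k+1}$ with $g(p)\ne 0$. Applying $g(\partial)$ to the displayed identity, the left side becomes $(d-k+1)!\,g(p)\ne 0$ while every term on the right vanishes because $g\in I(S)$; contradiction. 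It follows that $\nu_{d-k+1}(S')\subseteq V\cap\nu_{d-k+1}(\PP^n)=\nu_{d-k+1}(S)$, and injectivity of $\nu_{d-k+1}$ together with $|S'|=|S|$ yields $S=S'$. The only delicate step is the regularity implication, which requires the inequality $d-k+1\ge k$ (valid for both parities of $d$) and the standard passage from $h^1(\Ii_S(k-1))=0$ to global generation of $\Ii_S(d-k+1)$.
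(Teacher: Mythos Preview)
Your argument for part (b) is correct, and it follows a genuinely different route from the paper. The paper proves (a) by a geometric residual--exact--sequence argument: for any hypersurface $G=\{g=0\}$ of degree $k:=\lfloor d/2\rfloor$ containing a competing set $A$, it shows (via \cite[Lemma 5.1]{bb2}, reproduced here as Remark~\ref{a2}) that $S\subset G$ as well, whence $I(A)_k\subseteq I(S)_k$ and a dimension count forces $|A|\ge|S|$. Theorem~\ref{a3} then upgrades this to $I(A)_k=I(S)_k$, and (b) follows because $h^1(\Ii_S(k-1))=0$ makes $\Ii_S(k)$ globally generated, so the common zero locus of $I(S)_k$ is exactly $S$. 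Your approach instead extracts the span $V=\langle\nu_{d-k+1}(S)\rangle$ directly from $f$ as the image of the catalecticant $C_{k-1}(f)$, shows any competing $S'$ gives the same span, and then uses the same Castelnuovo--Mumford input (now in degree $d-k+1\ge k$) together with the apolarity pairing to pin down $V\cap\nu_{d-k+1}(\PP^n)=\nu_{d-k+1}(S)$. The two proofs share the regularity step but differ in how they tie $S$ and $S'$ together: the paper compares graded pieces of the ideals, you compare Veronese spans; these are apolar duals of one another, but your route is more self--contained (no appeal to the hypersurface--splitting lemma of \cite{bb2}) and makes the effectiveness transparent, since $V$ is computed as a column space of a single catalecticant matrix. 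One omission: you invoke part (a) rather than proving it. Your own machinery handles it immediately---run the identical factorisation with $C_k(f)$ in place of $C_{k-1}(f)$ (using only the weaker hypothesis of (a)) to get $\operatorname{rank}C_k(f)=|S|$, while any decomposition $A$ forces the image into $\langle\nu_{d-k}(A)\rangle$, so $|A|\ge|S|$; this also makes your later dimension count for $S'$ independent of (a).
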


In Remark \ref{a0} we explain why Theorem \ref{i1} effectively determines the rank of $q$ (and in the set-up of (b) the uniqueness statement often  called ``uniqueness of additive decomposition ''
for homogeneous polynomials or for symmetric tensors). Indeed, to check that $S$ satisfies the assumptions of part (a) (resp. part (b)) of Theorem \ref{i1} it is sufficient to check that a certain matrix with $|S|$ rows and $\binom{n+\lfloor d/2\rfloor}{n}$ (resp $\binom{n+\lfloor d/2\rfloor -1}{n}$) columns has rank $|S|$. This matrix has rank $|S|$ if $S$ is sufficiently general, but the test is effective for a specific set $S$.

See \cite{bbc} and \cite{bbcg} for results similar to Theorem \ref{i1} for tensors; roughly speaking \cite[Corollary 3.10, Remark 3.11 and their proof]{bbcg} is equivalent to part (a) of Theorem \ref{i1}. Part (a) of Theorem \ref{i1} is good, but one could hope
to get part (b) when
$|S| <
\binom{n+\lfloor d/2\rfloor}{n}$, adding some other easily testable assumptions on $S$. We prove the following strong result
(an essential step for the proof of part (b) of Theorem
\ref{i1}). To state it we recall the following notation: for any finite set $E\subset \PP^n$ and any $t\in \mathbb {N}$ let
$H^0(\Ii _E(t))$ denote the set of all $f\in \CC[z_0,\dots ,z_n]_t$ such that $f({p})=0$ for all $p\in E$. The set $H^0(\Ii
_E(t))$ is a vector space of dimension at least $\binom{n+t}{n} -|E|$.

\begin{theorem}\label{a3}
Fix $q\in \PP^r$ and take a finite set $S\subset \PP^n$ such that $\nu _d(S)$ minimally spans $q$. Assume $|S| <
\binom{n+\lfloor d/2\rfloor}{n}$ and that $S$ gives $|S|$ gives independent conditions to $\CC[z_0,\dots ,z_n]_{\lfloor
d/2\rfloor}$. Take any $A\subset \PP^n$ such that $|A| =|S|$ and $A$ induces an additive decomposition of $f$.  Let $H^0(\Ii
_A(\lfloor d/2\rfloor ))$ (resp. $H^0(\Ii _S(\lfloor d/2\rfloor ))$) denote the linear subspace of $\CC[z_0,\dots
,z_n]_{\lfloor d/2\rfloor}$ formed by the polynomials vanishing at all $p\in A$ (resp. $p\in S$). Then $H^0(\Ii _A(\lfloor
d/2\rfloor )) =H^0(\Ii _S(\lfloor d/2\rfloor ))$.
\end{theorem}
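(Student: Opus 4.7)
The plan is to exploit the apolarity contraction $C_e\colon \CC[z_0,\ldots,z_n]_e\to \CC[z_0,\ldots,z_n]_{d-e}$, $g\mapsto g(\partial)f$, where $e=\lfloor d/2\rfloor$ (so $d-e\geq e$), and to describe $\ker C_e$ in two different ways, once from the $S$-decomposition and once from the $A$-decomposition, obtaining $H^0(\Ii_S(e))$ and $H^0(\Ii_A(e))$ respectively. Fix linear representatives and write $f=\sum_i\lambda_i\ell_i^d=\sum_j\mu_j m_j^d$ with $S=\{[\ell_i]\}$, $A=\{[m_j]\}$. By Theorem~\ref{i1}(a) the hypotheses force $r_X(q)=|S|=|A|$, so both decompositions are of minimum rank; minimality then forces every $\lambda_i$ and every $\mu_j$ to be nonzero, since otherwise one could drop a summand and contradict $r_X(q)=|S|$.

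A preliminary step I would carry out is to promote the hypothesis on $S$ from degree $e$ to degree $d-e$: given the separating polynomials $g_p\in \CC[z_0,\ldots,z_n]_e$ produced by the degree-$e$ condition, multiplication by $L^{d-2e}$, where $L$ is a linear form nonvanishing on $S$, yields separating polynomials of degree $d-e$. Equivalently, $\{\ell_i^{d-e}\}$ is linearly independent in $\CC[z_0,\ldots,z_n]_{d-e}$.

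Using the standard identity $g(\partial)(\ell^d)=\tfrac{d!}{(d-e)!}\,g(\ell)\,\ell^{d-e}$, the $S$-decomposition gives $C_e(g)=c\sum_i\lambda_i g(\ell_i)\ell_i^{d-e}$ with $c\neq 0$. Linear independence of $\{\ell_i^{d-e}\}$ together with $\lambda_i\neq 0$ identifies $\ker C_e$ with $H^0(\Ii_S(e))$, and by rank--nullity $\dim\operatorname{Im}C_e=|S|$. The same computation on the $A$-side gives $\operatorname{Im}C_e\subseteq \langle m_j^{d-e}\rangle$, a subspace of dimension at most $|A|=|S|$; comparing dimensions forces $\{m_j^{d-e}\}$ to be linearly independent as well.

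This bootstrap is, in my view, the main obstacle of the argument: no a priori hypothesis on $\nu_{d-e}(A)$ is available, and its linear independence must be harvested from the image of $C_e$ as computed on the $S$-side. Once it is in hand, the same kernel computation applied to the $A$-decomposition, together with $\mu_j\neq 0$, yields $\ker C_e=H^0(\Ii_A(e))$. Since $\ker C_e$ depends only on $f$, comparing the two descriptions gives $H^0(\Ii_S(\lfloor d/2\rfloor))=H^0(\Ii_A(\lfloor d/2\rfloor))$, as desired.
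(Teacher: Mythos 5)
Your proof is correct, but it follows a genuinely different route from the paper's. The paper derives Theorem~\ref{a3} from the argument of part (a) of Theorem~\ref{i1}: for every $g\in H^0(\Ii_A(\lfloor d/2\rfloor))$ one forms the hypersurface $G=\{g=0\}$, applies the residual exact sequence of $G$ together with the Grassmann-formula lemma of Remarks~\ref{a1} and~\ref{a2} (i.e.\ \cite[Lemma 1]{bb} and \cite[Lemma 5.1]{bb2}) to conclude $S\subset G$, and then closes with the same codimension count you use at the end. You instead work entirely with the middle catalecticant $C_e$, $e=\lfloor d/2\rfloor$: the hypothesis on $S$ gives linear independence of $\{\ell_i^{d-e}\}$, hence $\ker C_e=H^0(\Ii_S(e))$ and $\operatorname{rank}C_e=|S|$; the containment $\operatorname{Im}C_e\subseteq\langle m_j^{d-e}\rangle$ then bootstraps the independence of $\{m_j^{d-e}\}$ (correctly identified as the one step where no hypothesis on $A$ is available), after which $\ker C_e=H^0(\Ii_A(e))$ and the two kernels coincide because $C_e$ depends only on $f$. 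All steps check out: the nonvanishing of the $\mu_j$ does follow from $|A|=r_X(q)$ via Theorem~\ref{i1}(a) (which is proved independently of Theorem~\ref{a3}, so there is no circularity), and the promotion of independent conditions from degree $e$ to degree $d-e$ by multiplying by $L^{d-2e}$ is legitimate over an infinite field. What the two approaches buy: the paper's cohomological argument extends verbatim to zero-dimensional schemes (hence to cactus rank, as noted in its remarks) and produces the finer conclusion $A\setminus A\cap G=S\setminus S\cap G$ exploited in Section~\ref{S4}; your apolarity argument is more elementary and self-contained, avoids sheaf cohomology entirely, and yields the slightly sharper statement that both ideals in degree $e$ equal $\ker C_e$, i.e.\ the degree-$e$ piece of the apolar ideal of $f$.
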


Theorem \ref{a3} does not assure that $S$ is the only set evincing the rank of $q$, i.e. the uniqueness of the addenda in an
additive decomposition of $f$ with $R(f)$ terms, but it shows where the other sets $A$ giving potential additive decomposition with $R(f)$ addenda may be located. The results in
\cite{acv} (in particular \cite[Theorem 6.2 and 6.3, Proposition 6.4]{acv}) for
$d=4$ show that non-uniqueness does occur if and only if the base locus of $|\Ii _S(2)|$ allows the existence of $A$.

In the last section we take $d=4$. E. Angelini, L. Chiantini and N.  Vannieuwenhoven consider the case $d=4$ and $|S|=2n+1$
with an additional geometric property (linearly general position or GLP for short; section \ref{s4} for its definition). For
$d+4$ and $|S|=2n+1$ they classified the set
$S$ in GLP for which identifiability holds. In section \ref{S4} using Theorem \ref{a3} we classify another family of sets $S$
with
$|S|=2n+1$ and for which identifiability holds (Theorem \ref{i0}).

\begin{remark}
The results used to prove Theorem \ref{i1} (and summarized in Remarks \ref{a1} and \ref{a2}) works verbatim for a
zero-dimensional schemes $A\subset \PP^n$. Under the assumption of part (a) of Theorem \ref{i1} the cactus rank of $q$
(see \cite{bbm, br, rs} for its definition and its uses) is $|S|$.  Under the assumptions of part (b) of Theorem \ref{i1} $S$ is the only zero-dimensional subscheme of $\PP^n$
evincing the cactus rank of $q$. However for our proofs it is important that $S$ (i.e. the scheme to be tested) is finite set,
not a zero-dimensional scheme. 
\end{remark}

\begin{remark}
The interested reader may check that the proof works with no modification if instead of $\CC$ we take a any algebraically
closed field containing $\QQ$. Since it uses only linear systems, it works over any field $K\supseteq \QQ$ if as an additive
decomposition of $f\in K[z_0,\dots ,z_n]_d$ we take an expression $\sum c_i\ell _i^d$ with $c_i\in K$ and $\ell _i\in
K[z_0,\dots ,z_n]_1$. Thus for the real field $\RR$ when $d$ is odd we may take the usual definition (\ref{eqi1}) of additive decomposition,
while if $d$ is even we allow $c_i\in \{-1,1\}$. Theorem \ref{i1} applied to $\CC$ says that $|S|$ is the complex rank of
$q$, too, and in set-up of part (b) uniqueness holds even if we allow complex decompositions.
\end{remark}

\begin{remark}
In the proofs of our results we use nothing about the form $f$ or the point $q=[f]\in \PP^r$. All our assumptions are on
the set
$S$ and they apply to all $q\in \langle \nu _d(S)\rangle$ minimally spanned by $\nu _d(S)$. In all our results the set $\nu
_d(S)$ is linearly independent and hence the set of all $q\in \PP^r$ minimally spanned by $\nu _d(S)$ is the complement in the
$(|S|-1)$-dimensional linear space $\langle \nu _d(S)\rangle$ of $|S|$ codimension $1$ linear subspaces. To test that $\nu
_d(S)$ minimally spans $q$ it is sufficient to check the rank of a matrix with $|S|$ rows and $\binom{n+d}{n}$ columns. To
the best of our knowledge this check (or a very similar one) must be done for all criteria of effectivity for forms
(\cite{acv}).
\end{remark}

\section{The proofs of Theorems \ref{i1} and \ref{a3}}
Fix $q\in \PP^r = \PP ^r = \PP
\CC[z_0,\dots ,z_n]_d$. Recall that a finite subset $E\subset \nu _d(\PP^n)$ minimally spans $q$ if $q\in 
\langle E\rangle$ and $q\notin \langle E'\rangle$ for any $E'\subsetneq E$. Note that if $E$ minimally spans a point of $\PP^r$, then
it is linearly independent, i.e. $\dim \langle E\rangle = |E|-1$. If $E = \nu _d(A)$ for some $A\subset \PP^n$, $E$ is
linearly independent if and only if $A$ induces $|A|$ independent conditions to $\CC[z_0,\dots ,z_n]_d$.

\begin{remark}\label{a0}
Fix an integer $t\ge 0$ and a finite subset $A$ of $\PP^n$. We write $h^1(\Ii _A(t))$ for the
difference between $|A|$ and the number of independent conditions that $A$ imposes to the $\binom{n+t}{n}$-dimensional
vector space $\CC[z_0,\dots ,z_n]_t$. Taking as a basis of $\CC[z_0,\dots ,z_n]_t$ all degree $t$ monomials in $z_0,\dots ,z_n$
to compute the non-negative integer $h^1(\Ii _A(t))$ we only need to compute the rank of the matrix with $|A|$ rows and
$\binom{n+t}{n}$ columns. Indeed, the linear span of $\nu _d(A)$ is computed by the linear system obtained evaluating the
polynomial $\sum c_\alpha z^\alpha$ with $c_\alpha$ variables at each $p\in A$.
\end{remark}

Fix $f\in \CC[z_0,\dots ,z_n]_d \setminus \{0\}$ and let $q =[f]\in \PP^r =\PP \CC[z_0,\dots ,z_n]_d$, $r =\binom{n+d}{n}-1$,
be the point associated to $f$. Take $S\subset \PP^n$ such that $\nu _d(S)$ minimally spans $q$. Fix any $A\subset \PP^n$
evincing the rank of $f$. We have $|A| \le |S|$. Set $Z:= A\cup B$. $Z$ is a finite subset of $\PP^n$ and $|Z| \le |A|+|S|$.
To prove part (a) of Theorem \ref{i1} we need to prove that $|A|=|S|$. To prove part
(b) we need to prove that $A=S$. In the proof of part (a) we have $A\ne S$, because $|A|<|S|$. To prove part (b) of the
theorem it is sufficient to get a contradiction from the assumption $A\ne S$. Thus from now on we assume $A\ne S$. Since
$A\ne S$ and both $\nu _d(A)$ and $\nu _d(S)$ minimally span $q$, we have $h^1(\Ii _Z(d)) \ne 0$ (\cite[Lemma 1]{bb}; see Remark \ref{a1} for more details.

\begin{remark}\label{a1}
Fix $q\in \PP^r = \PP ^r = \PP \CC[z_0,\dots ,z_n]_d$. For all linear subspaces $U, W\subseteq \PP^r$ the Grassmann's formula
says that 
$$\dim U\cap W + \dim U+W = \dim U + \dim W$$with the convention $\dim \emptyset = -1$. Fix $q\in \PP^r = \PP ^r = \PP
\CC[z_0,\dots ,z_n]_d$ and assume the existence of finite sets $A, B\subset \PP^n$ such that $\nu _d(A)$ and $\nu _d(B)$
minimally span $q$. Since $\nu _d(A)$ (resp. $\nu _d(B)$) minimally spans $q$, we have $\dim
\langle
\nu _d(A)\rangle =|A|-1$ (resp. $\dim \langle \nu _d(A)\rangle =|A|-1$).  Since
$A\ne B$, we have
$A\cap B\subsetneq A$ and
$A\cap B\subsetneq B$. Since
$q\in
\langle
\nu _d(A)\rangle
\cap
\langle
\nu _d(B)\rangle$ and $q\notin \langle \nu _d(A\cap B)\rangle$, we have $\langle \nu _d(A)\cap \rangle \nu _d(B)\rangle
\supsetneq \langle \nu _d(A\cap B)\rangle$. Since $\nu _d(A)$ and $\nu _d(B)$ are linearly
independent and  $\langle \nu _d(A)\cap \rangle \nu _d(B)\rangle
\supsetneq \langle \nu _d(A\cap B)\rangle$, the Grassmann's formula gives that $\nu _d(A\cup B)$ is not linearly independent,
i.e.
$A\cup B$ does not impose $|A\cup B|$ independent conditions to $\CC[z_0,\dots ,z_n]_d$ (\cite[Lemma 1]{bb}).
\end{remark}

\begin{remark}\label{a2}
We explain the particular case of \cite[Lemma 5.1]{bb2} we need. Fix $q\in \PP^r = \PP ^r = \PP
\CC[z_0,\dots ,z_n]_d$ and take finite sets $A, B\subset \PP^n$ such that $\nu _d(A)$ and $\nu _d(B)$ minimally spans $q$.
In particular both $A$ and $B$ are linearly independent. Set $Z:= A\cup B$. We assume that
$Z\setminus Z\cap G$ gives $|Z\setminus Z\cap G|$ independent conditions to $\CC[z_0,\dots ,z_n]_{d-t}$, i.e. we assume
$h^1(\PP^n,\Ii _{Z\setminus Z\cap G}(d-t)) =0$. We want to prove that $A\setminus A\cap G=B\setminus B\cap G$ (\cite[Lemma
5.1]{bb2}). Since this is obvious if $Z\subset G$, we may assume $Z\ne Z\cap G$, say $A\ne A\cap
G$; just to fix the notation we also assume $A\cap G\ne \emptyset$. Set
$E:= A\cap B
\setminus A\cap B\cap G$ and write $A\setminus A\cap G =A\cup A_1$ and $B\setminus B\cap G = E\cup B_1$ with $A_1\cap E
=B_1\cap E =A_1\cap B_1 =\emptyset$. The finite set
$\nu _d(A\cup B)$ is not linearly independent (Remark \ref{a1}), i.e. $h^1(\PP^n,\Ii _{A\cup B}(d)) \ne 0$. Consider the
residual exact sequence of
$G$:
\begin{equation}\label{eqa1}
0 \to \Ii _{Z\setminus Z\cap G}(d-t)\to \Ii _Z(d)\to \Ii _{Z\cap G, G}(d)\to 0
\end{equation}
Since $h^1(\PP^n,\Ii _{Z\setminus Z\cap G}(d-t)) =0$, the long cohomology exact sequence of (\ref{eqa1}) gives $h^1(\PP^n,\Ii _Z(d)) =
h^1(\Ii _{Z\cap G}(d))$, i.e. 
$$\dim \langle \nu _d(Z)\rangle = \dim \langle \nu _d(Z\cap G)\rangle + |Z(A\cup B)\setminus Z\cap G)|,$$i.e. $\nu
_d(Z\setminus Z\cap G)$ is linearly independent and $\langle \nu
_d(Z\setminus Z\cap G)\rangle \cap \langle \nu _d(A\cap G)\rangle =\emptyset$. Since $\nu _d(Z\setminus A\cap G)$ is
linearly independent, while $\nu _d(Z)$ is not linearly independent, we have $Z\cap G\ne \emptyset$. Thus there are
uniquely determined
$q'\in \langle \nu _d(Z\cap G)\rangle$ and $q''\in \langle \nu _d(Z\setminus Z\cap G\rangle$ such that
$q\in \langle \{q',q''\}\rangle$. Since $\nu _d(A)$ minimally spans $q$, $A\cap G\ne \emptyset$ and $A\setminus A\cap G\ne
\emptyset$ we have
$q'\in
\langle \nu _d(A\cap G)\rangle$, $q''\in \langle \nu _d(A\setminus A\cap G)\rangle$, $q\notin \{q',q''\}$ and that $\nu
_d(E\cup A_1)$ minimally spans $q''$. Using $B$ we see that $\nu
_d(E\cup B_1)$ minimally spans $q''$. Since $\nu _d(E\cup A_1\cup B_1)$ is linearly independent, we have
$\langle \nu _d(E\cup A_1)\rangle \cap \langle \nu _d(E\cup B_1)\rangle =\langle \nu _d(E)\rangle$. Since both $\nu _d(E\cup
A_1)$
and $\nu _d(E\cup B_1)$ minimally spans $q''$, we get $A_1=B_1=\emptyset$, i.e. $A\setminus A\cap G =B\setminus B\cap G$.
\end{remark}

\begin{proof}[Proof of part (a) of Theorem \ref{i1}:]
Recall that $\dim \mathbb {C}[z_0,\dots ,z_n]_{\lfloor d/2\rfloor} =\binom{n+\lfloor d/2\rfloor}{n}$. Since $|A|<|S| \le \binom{n+\lfloor d/2\rfloor}{n}$, there is $g\in \CC[z_0,\dots ,z_n]_{\lfloor d/2\rfloor}$ such that $g({p})
=0$ for all $p\in A$. Let $G\subset \PP^n$ be the degree $\lfloor g/2\rfloor$ hypersurface $\{g=0\}$ of $\PP^n$. Since
$A\subset G$, we have $Z\setminus Z\cap G = A\setminus A\cap G$. Thus $Z\setminus Z\cap G$ gives independent conditions to
forms of degree $\lfloor d/2\rfloor$. Thus it gives independent conditions to forms of degree $\lceil d/2\rceil = d-\lfloor
d/2\rfloor$. Since $A\subset G$, Lemma \ref{a2} gives $S\subset G$. Since this is true for all $g\in \CC[z_0,\dots
,z_n]_{\lfloor d/2\rfloor}$ such that $g({p}) =o$ for all $p\in A$, we get that if $g_{|A} =0$ and $g$ has degree $\lfloor
d/2\rfloor$, then $g_{|S}=0$. Since $S$ gives $|S|$ independent linear conditions to $\CC[z_0,\dots ,z_n]_{\lfloor
d/2\rfloor}$, $A$ gives at least $|S|$ linear independent condition to $\CC[z_0,\dots ,z_n]_{\lfloor d/2\rfloor}$,
contradicting the inequality $|A|<|S|$.\end{proof}

\begin{proof}[Proof of Theorem \ref{a3}]
To prove Theorem \ref{a3} we may assume $A\ne S$. Since $|A|=|S|< \binom{n+\lfloor d/2\rfloor}{n}$, there is $g\in \CC[z_0,\dots ,z_n]_{\lfloor d/2\rfloor}$ such that $g_{|A}\equiv 0$. The proof of part (a) of Theorem \ref{i1} gives $g_{|S}\equiv 0$. Thus $H^0(\Ii _A(\lfloor d/2\rfloor )) \supseteq H^0(\Ii _S(\lfloor d/2\rfloor ))$. Since $H^0(\Ii _S(\lfloor d/2\rfloor ))$ has codimension $|A|$ in $\CC[z_0,\dots ,z_n]_{\lfloor d/2\rfloor}$, we get $H^0(\Ii _A(\lfloor d/2\rfloor )) =H^0(\Ii _S(\lfloor d/2\rfloor ))$.\end{proof}

\begin{proof}[Proof of part (b) of Theorem \ref{i1}:] We have $H^0(\Ii _A(\lfloor d/2\rfloor )) =H^0(\Ii _S(\lfloor d/2\rfloor
))$ by Theorem \ref{a3}. To get $A=S$ it is sufficient to prove that for each
$p\in \PP^n\setminus A$ there is $g\in H^0(\Ii _S(\lfloor d/2\rfloor ))$ such that $g({p}) \ne 0$. More precisely it is
sufficient to prove that the sheaf $\Ii _S(\lfloor d/2\rfloor ))$ is generated by its global sections. The assumption that $S$
gives $|S|$ independent conditions to $\CC[z_0,\dots ,z_n]_{\lfloor d/2\rfloor -1}$ is translated in cohomological terms as
$h^1(\PP^n,\Ii _S(\lfloor d/2\rfloor -1))=0$. The sheaf $\Ii _S(\lfloor d/2\rfloor ))$ is generated by its global sections
(and in particular for each $p\in \PP^n\setminus S$ there is $f\in H^0(\Ii _S(\lfloor d/2\rfloor))$ such that $f({p})\ne 0$)
by the Castelnuovo-Mumford's lemma (\cite[Corollary 4.18]{e},
\cite[Theorem 1.8.3]{laz}.
\end{proof}

\section{The case $d=4$}\label{S4}

Set $X:= \nu _d(\PP^n)\subset \PP^r$. For any $q\in \PP^r$ let $\Ss (X,q)$ denote the set of all finite subsets $S\subset X$
evincing the
$X$-rank of
$q$, i.e. the set of all finite subsets $S\subset X$ such that $S$ spans $q$ and $S$ has the minimal cardinality  among all
subsets of
$X$ spanning $q$. By the definition of identifiability with respect to $X$ we have $|\Ss (X,q)|=1$ if and only if $q$ is
identiable.

A finite set $S\subset \PP^n$ is said to be in \emph{linearly general position} (or in LGP, for short) if $\dim \langle
A\rangle = \min \{0,|A|-1\}$ for each $A\subseteq S$. If $|S|\ge n+1$ the set $S$ is in LGP if and only if each $A\subseteq S$
with $|A|=n+1$ spans $\PP^n$.

In this section we take $d=4$ and hence $r=\binom{n+4}{n}-1$.

\begin{theorem}\label{i0}
Fix a finite set $S\supset \PP^n$ such that $|S|=2n+1$ and take $q\in \PP^r$, $r=\binom{n+4}{n}-1$, such that $\nu _4(S)$
minimally spans
$q$. Assume the existence of
$S'\subset S$ such that $|S'|=2n$,
$S'$ is in LGP, but no
$S''$ with
$S'\subsetneq S''\subseteq S$ is in LGP. The point $q$ has rank $2n+1$. Let $e$ be the dimension of a minimal subspace
$N\subset \PP^n$ such that
$|N\cap S|\ge e+2$.  The point $q\in \PP^r$ is identifiable if and only if $e\ge 2$. If $e=1$, then $\dim \Ss (X,q)= 1$.
\end{theorem}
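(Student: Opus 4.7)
The plan is to deduce $r_X(q)=2n+1$ from Theorem \ref{i1}(a), use Theorem \ref{a3} to trap every alternative minimal decomposition $A$ of $q$ inside the base locus of $|\Ii_S(2)|$, and then describe this base locus according to the value of $e$.

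To invoke Theorem \ref{i1}(a) with $d=4$, the numerical condition $|S|=2n+1\leq\binom{n+2}{2}$ is immediate for all $n\geq 1$, and the cohomological condition $h^1(\Ii_S(2))=0$ follows from two facts. First, the classical result that $\leq 2n+1$ points in LGP in $\PP^n$ impose independent conditions on quadrics, applied to $S'$, yields $h^1(\Ii_{S'}(2))=0$. Second, the point $p:=S\setminus S'$ is not in the base locus of $|\Ii_{S'}(2)|$: this one verifies by constructing a reducible quadric (a union of hyperplanes) that vanishes on $S'$ but not at $p$, exploiting the minimality of the excess subspace $N$ to arrange an appropriate splitting of $S'$. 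This gives $h^1(\Ii_S(2))=0$ and $r_X(q)=2n+1$.

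Given a second set $A$ with $|A|=2n+1$ minimally spanning $q$, Theorem \ref{a3} gives $H^0(\Ii_A(2))=H^0(\Ii_S(2))$; hence $A\subset B:=\op{Bs}|\Ii_S(2)|$. For $e\geq 2$, no line contains three points of $S$; the plan is to show $B=S$ by producing, for any $p_0\in\PP^n\setminus S$, a reducible quadric through $S$ that avoids $p_0$, using the LGP of $S'$ and a case analysis on the dimension $e$ of the minimal excess subspace. Then $A\subset B=S$ combined with $|A|=|S|$ forces $A=S$, whence $q$ is identifiable. For $e=1$, three points of $S$, call them $p,s_1,s_2$, lie on a line $L$; every quadric through three collinear points contains $L$, so $L\subset B$, and a parallel construction yields $B=L\cup S''$, with $S'':=S\setminus\{p,s_1,s_2\}$ a set of $2n-2$ LGP points lying off $L$.

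For the final claim in the $e=1$ case, take $A\subset L\cup S''$ with $|A|=2n+1$ minimally spanning $q$. A linear-span computation shows that $\langle\nu_4(L)\rangle$ and $\langle\nu_4(S'')\rangle$ are disjoint in $\PP^r$ (using LGP of $S''$ together with the fact that $S''$ lies off $L$), so the decomposition of $q$ into its $L$- and $S''$-parts is unique. This forces $A\supseteq S''$ and $|A\cap L|=3$, and the three points of $A\cap L$ decompose the binary quartic $q_L:=\ell_p^4+\ell_{s_1}^4+\ell_{s_2}^4$ on $L$ as a sum of three fourth powers. By classical Sylvester theory, the apolar ideal of a rank-$3$ binary quartic has a pencil of cubics in degree $3$, yielding a $\PP^1$-family of rank-$3$ decompositions on $L$. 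Hence $\Ss(X,q)$ is parametrized by $\PP^1$ and $\dim\Ss(X,q)=1$.

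The main obstacle will be the precise identification of $B$ in the $e\geq 2$ case: one must show that no positive-dimensional components (such as rational normal curves through large subsets of $S$) and no unexpected isolated points appear in the base locus beyond $S$ itself. This requires a careful, case-by-case construction of reducible quadrics through $S$ avoiding each given $p_0\in\PP^n\setminus S$, exploiting both the LGP of $S'$ and the precise value of $e$. A secondary technical point is verifying the linear disjointness of $\langle\nu_4(L)\rangle$ and $\langle\nu_4(S'')\rangle$ needed in the $e=1$ step, which reduces to showing that $S''$ imposes independent conditions on quartics vanishing on $L$.
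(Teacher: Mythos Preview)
Your overall strategy coincides with the paper's: prove $h^1(\Ii_S(2))=0$ and apply Theorem~\ref{i1}(a) for the rank, invoke Theorem~\ref{a3} to force any competing $A$ into the base locus $\Bb$ of $|\Ii_S(2)|$, compute $\Bb$ according to $e$, and finish the $e=1$ case with Sylvester. The difference is in how $\Bb$ is computed, and here the paper has a cleaner device that dissolves what you flag as the ``main obstacle''.

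Rather than an ad hoc case analysis on $e$ producing reducible quadrics through $S$ avoiding a given point, the paper first exhibits \emph{one} reducible quadric $H\cup M$ containing all of $S$: take $H$ to be a hyperplane spanned by $n$ points of $S'$ and containing $N$ (hence containing the extra point $S\setminus S'$), and let $M$ be the hyperplane spanned by the remaining $n$ points of $S'$. Then $S\subset H\cup M$, so automatically $\Bb\subseteq H\cup M$. The residual exact sequences of $H$ and of $M$ show both that $h^1(\Ii_S(2))=0$ and that the restriction maps $H^0(\Ii_S(2))\to H^0(H,\Ii_{S\cap H,H}(2))$ and $H^0(\Ii_S(2))\to H^0(M,\Ii_{S\cap M,M}(2))$ are surjective, whence $\Bb=\Bb_1\cup\Bb_2$ with $\Bb_i$ the base loci inside $H$ and $M$. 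Since $S\cap M$ consists of $\le n+1$ independent points of $M\cong\PP^{n-1}$, Remark~\ref{b1} gives $\Bb_2=S\cap M$. For $\Bb_1$ one starts in $N$, where $S\cap N$ is $e+2$ points in LGP in $\PP^e$ (Remark~\ref{b2} gives $\Bb\cap N=S\cap N$ exactly when $e\ge 2$, and $\Bb\cap N=N$ when $e=1$), and then climbs one hyperplane at a time up to $H$ via Lemma~\ref{b5}. No further case distinction on $e$ is needed.

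For the $e=1$ endgame the paper argues as you do, packaging the unique splitting $q\in\langle\{q',q''\}\rangle$ via Remark~\ref{b3} and invoking Sylvester on $\nu_4(N)$. The paper is equally terse on the point you correctly isolate---why the splitting coming from $A=A_1\cup A_2$ with $A_1\subset N$, $A_2\subseteq S\setminus S\cap N$ must agree with the one coming from $S$---so your identification of the linear-disjointness check (equivalently, that $S''$ imposes independent conditions on $H^0(\Ii_N(4))$) is a genuine detail worth spelling out; it follows, for instance, from the $H\cup M$ residual sequences already established.
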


To prove Theorem \ref{i0} we need some elementary observations.

\begin{remark}\label{b1}
Take $A\subset \PP^m$, $m\ge 1$, such that $|A|=m+1$ and $A$ spans $\PP^m$. By induction on $m$ we see that $h^1(\Ii _A(2))
=0$ and that $\Ii _A(2)$ is spanned by its global sections.
\end{remark}

\begin{remark}\label{b2}
Take $A\subset \PP^m$, $m\ge 1$, such that $|A|=m+2$ and $A$ is in LGP. Any two such sets are projectively equivalent. We get
that $h^1(\Ii _A(2))=0$. The sheaf $\Ii _A(2)$ is spanned by its global sections if and only if $m\ge 2$.
\end{remark}

\begin{remark}\label{b3}
Take $A\in \Ss (X,q)$ and any $A'\subsetneq A$, $A'\ne \emptyset$. Set $A'':= A\setminus A'$. In particular $|A|\ge 2$ and
hence
$q\notin X$. Since
$A$ evinces an
$X$-rank, it is linearly independent and
$h^1(\PP^r,\Ii _{A\cup \{q\}}(1)) =1$. Since $A''\subsetneq A$, we have $q\notin \langle A''\rangle$. Thus $\langle
A'\rangle \cap \langle A''\cup \{q\}\rangle$ is a single point, $q'$, and $q'$ is the only element of $\langle A'\rangle$
such that $q\in \langle \{q'\}\cup A''\rangle$. In the same way we see the existence of a single point $q''\in \langle
A''\rangle$ such that $q\in \langle A'\cup \{q''\}\rangle$. We have $q\in \langle \{q',q''\}\rangle$. Since $A\in \Ss (X,q)$,
we have $A'\in \Ss (X,q')$ and $q''\in \Ss (X,q'')$. If we only assume that $A$ minimally spans $q$ the same proof gives the
existence and uniqueness of $q'$ and $q''$ such that $A'$ minimally spans $q'$ and $A''$ minimally spans $q''$.
\end{remark}

\begin{lemma}\label{b5}
Let $H \subset \PP^m$, $m\ge 2$, be a hyperplane. Take a finite set $S\subset \PP^m$ such that $|S\setminus S\cap U|=1$. Take
homogeneous coordinates $z_0,\dots ,z_m$ of $\PP^m$ such that $H = \{z_m=0\}$.. 

\quad (i) If $S\cap H$ imposes independent conditions to $\CC [z_0,\dots ,z_{m-1}]_2$, then $S$ imposes independent conditions
to
$\CC [z_0,\dots ,z_m]_2$.

\quad (ii) If $S\cap H$ is the base locus of $|\Ii _{S\cap H}(2)|$, then $S$ is the base locus of $|\Ii _S(2)|$.
\end{lemma}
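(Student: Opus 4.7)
The plan is to peel off the unique point $p \in S \setminus (S \cap H)$ using the residual exact sequence with respect to $H$ and reduce each assertion to the corresponding statement on $H$ (which we are handed) plus a trivial fact about a single point in $\PP^m$. Note $p \notin H$, so $z_m(p) \ne 0$. The key tool is the short exact sequence
\begin{equation*}
0 \to \Ii_{\{p\}}(1) \xrightarrow{\cdot z_m} \Ii_S(2) \to \Ii_{S\cap H, H}(2) \to 0,
\end{equation*}
together with the elementary vanishing $h^1(\PP^m, \Ii_{\{p\}}(1)) = 0$ (any single point imposes one independent condition on linear forms, since $m \ge 1$).

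For part (i), I would take the long cohomology exact sequence. The hypothesis says $h^1(H, \Ii_{S\cap H, H}(2)) = 0$, and the left term has $h^1$ vanishing as noted, so $h^1(\PP^m, \Ii_S(2)) = 0$, which is exactly the statement that $S$ imposes $|S|$ independent conditions on $\CC[z_0,\dots,z_m]_2$.

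For part (ii), I would verify base-point-freeness of $|\Ii_S(2)|$ at an arbitrary $q \in \PP^m \setminus S$ by splitting into two cases. If $q \notin H$, pick any linear form $\ell$ with $\ell(p)=0$ and $\ell(q)\ne 0$ (possible since $p\ne q$); then $z_m \ell$ is a quadric vanishing on $H \cup \{p\} \supseteq S$ with $(z_m\ell)(q) = z_m(q)\ell(q)\ne 0$. If $q \in H \setminus (S\cap H)$, the hypothesis on $H$ yields $G \in H^0(H, \Ii_{S\cap H,H}(2))$ with $G(q)\ne 0$; since the vanishing of $h^1(\Ii_{\{p\}}(1))$ makes the restriction map $H^0(\Ii_S(2)) \to H^0(H, \Ii_{S\cap H, H}(2))$ surjective, lift $G$ to some $\tilde G \in H^0(\Ii_S(2))$. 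Because $q \in H$, we have $\tilde G(q) = G(q) \ne 0$, and we are done.

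There is no real obstacle; the only mild care needed is keeping straight which sheaf lives on $H$ versus $\PP^m$ when lifting $G$ to $\tilde G$ in the second case of (ii), and noting that the degenerate possibility $S\cap H = \emptyset$ is covered vacuously (the hypothesis of (ii) is then empty, and only the first case $q \notin H$ arises, which is handled directly).
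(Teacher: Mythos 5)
Your proposal is correct and follows essentially the same route as the paper: the residual exact sequence of $H$ peeling off the single point $p$, the vanishing $h^1(\Ii_{\{p\}}(1))=0$ giving both part (i) and the surjectivity of the restriction to $H$, and a reducible quadric $H\cup\{\ell=0\}$ with $\ell(p)=0$ handling base points off $H$. No gaps.
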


\begin{proof}
Set $\{p\}:= S\setminus S\cap H$ and call $\Bb$ the base locus of $|\Ii _{S\cap H}(2)|$. We have the residual exact sequence of
$H$:
\begin{equation}\label{eqb3}
0 \to \Ii _p(1)\to \Ii _S(2) \to \Ii _{S\cap H,H}(2)\to 0
\end{equation}
Since $\{p\}$ imposes independent conditions to $\CC [z_0,\dots ,z_m]_1$, we get part (i) and that the restriction map $\rho :
H^0(\Ii _S(2)) \to H^0(H,\Ii _{S\cap H,H}(2))$ is surjective. Assume that $S\cap H$ is the base locus of $|\Ii _{S\cap H}(2)|$.
Since $\rho$ is surjective, we get $\Bb \cap H =S\cap H$. Fix $o\in \PP^n\setminus H$ such that $o\ne p$. Take a hyperplane
$M\subset \PP^m$ such that $p\in M$ and $o\notin M$. The reducible quadric $H\cup M$ shows that $o\notin \Bb$.
\end{proof}

\begin{proof}[Proof of Theorem \ref{i0}:]
Let $H\subset \PP^n$ be a hyperplane containing $N$ and spanned by points of $S'$. Since $S'$ is in GLP and $|S|=|S'|+1$,
we have
$|S\cap H|+n+1$, $|S'\cap H| =n$, $S\setminus S\cap H = S'\setminus S\cap H$, and $|S'\setminus S'\cap H|=n$. Since $S'$
is in GLP, $S'\setminus S'\cap H$ spans a hyperplane, $M$, and $S'\cap H\cap M=\emptyset$. Set $A:= S'\cap H$ and $B:=
S'\cap M$. Note that $S\subset H\cup M$, $n \le |M\cap S|\le n+1$ and $|S\cap M|=n+1$ if and only if
$S\setminus S'\subset H\cap M$, i.e. if and only if $N\subseteq H\cap M$. Set $\Bb :=
\{p\in \PP^n\mid \dim H^0(\Ii _{S\cup \{p\}}(2)) =\dim H^0(\Ii _S(2))\}$. Since $S\subset H\cup M$ and $H\cup M$ is a quadric
hypersurface, we have
$S\subseteq
\Bb
\subseteq H\cup M$. Consider the
residual exact sequences of $H$ and $M$:
\begin{equation}\label{eqb1}
0 \to \Ii_{S\setminus S\cap H}(1) \to \Ii _S(2)\in \Ii _{S\cap M,M}(2)\to 0
\end{equation}
\begin{equation}\label{eqb2}
0 \to \Ii_{S\setminus S\cap M}(1) \to \Ii _S(2)\in \Ii _{S\cap M,M}(2)\to 0
\end{equation}

Note that $\Bb$ contains the base locus $\Bb _1$ of $\Ii _{S\cap H,H}(2)$ and the base locus $\Bb _2$ of $\Ii _{S\cap
M,M}(2)$.

By Remark
\ref{b1} we have
$h^1(H,\Ii _{S\cap H,H}(2))=h^1(M,\Ii _ {S\cap M}(2))=0$. By the long cohomology exact sequences of (\ref{eqb1}) we get
$h^1(\Ii _S(2)) =0$. Theorem \ref{a3} gives that $q$ has rank
$2n+1$. By the long cohomology exact sequences of (\ref{eqb1}) and
(\ref{eqb2}) the restriction maps $\rho : H^0(\Ii _S(2)) \to H^0(H,\Ii _{S\cap H,H}(2))$ and $\rho ': H^0(\Ii _S(2)) \to
H^0(M,\Ii _{S\cap M,M}(2))$ are surjective. Thus $\Bb =\Bb _1\cup \Bb _2$.  Since $S\cap M$ is linearly independent, we have
$\Bb _2 =S\cap M$. 

\quad (a1) Assume $e\ge 2$. By Remark \ref{b2} $S\cap N$ is the base locus of $\Ii _{S\cap N}(2)$. Applying (if $e<n-1$)
$n-1-e$ times Lemma \ref{b5} we get $\Bb _1 = S\cap H$. Thus $\Bb =S$.

\quad (a2) Assume $e=1$. In this case $\Bb$ contains the line $N$. The proof of Lemma \ref{b5} gives $\Bb _1 = N\cup (S\cap
H)$. Assume the existence of $A\in \Ss (X,q)$ such that $A\ne S$. By Theorem \ref{a3} we have $A\subset N\cap (S\setminus
S\cap N)$. Thus $A = A_1\cup A_2$ with $A_1\subset N$, $A_2\subseteq S\setminus S\cap N$ and $A_1\cap A_2=\emptyset$. We apply
Remark
\ref{b3} with
$A' = N\cap S$ and get
$q'\in \langle \nu _d(S\cap N)\rangle$ and $q''\in \langle \nu _d(S\setminus S\cap N)\rangle$ such that $q \in \langle
\{q',q''\}\rangle$. Since $|S\cap N| =3$, Sylvester's theorem $q'$ has rank $3$ with respect to degree $4$ rational normal
curve $\nu _4(N)$. We get $|A\cap N|\le 3$. Since $|A|=|S|$, we get that each element of $\Ss (X,q)$ is the union of
$S\setminus S\cap N$. By Sylvester's theorem (\cite[\S 1.5]{ik}) we have $\dim \Ss (\nu _d(N),q') =1$.
\end{proof}

\end{document}